\newtheorem{thm}{Theorem}[section]
\newtheorem{lm}[thm]{Lemma}
\newtheorem{rmk}[thm]{Remark}
\newcommand{\figref}[1]{\hyperref[#1]{Figure \ref{#1}}}
\newcommand{\lemref}[1]{\hyperref[#1]{Lemma \ref{#1}}}
\newcommand{\thmref}[1]{\hyperref[#1]{Theorem \ref{#1}}}
\newcommand{\conjref}[1]{\hyperref[#1]{Conjecture \ref{#1}}}
\newcommand{\propref}[1]{\hyperref[#1]{Proposition \ref{#1}}}
\newcommand{\corref}[1]{\hyperref[#1]{Corollary \ref{#1}}}
\newcommand{\defref}[1]{\hyperref[#1]{Definition \ref{#1}}}
\newcommand{\rmkref}[1]{\hyperref[#1]{Remark \ref{#1}}}
\newcommand{\qref}[1]{\hyperref[#1]{Question \ref{#1}}}
\newcommand{\secref}[1]{\hyperref[#1]{\S\ref{#1}}}
\newcommand{\appref}[1]{\hyperref[#1]{Appendix \ref{#1}}}
\newcommand{\R}{\mathbf{R}}
\newcommand{\h}{\mathbf{H}}
\newcommand{\Q}{\mathbf{Q}}
\newcommand{\Z}{\mathbf{Z}}
\newcommand{\SL}{\mathrm{SL}}
\newcommand{\PSL}{\mathrm{PSL}}
\newcommand{\cC}{\mathcal{C}}
\newcommand{\cN}{\mathcal{N}}
\newcommand{\ga}{\alpha}     
\newcommand{\gb}{\beta}      
\newcommand{\G}{\Gamma}      
\newcommand{\g}{\gamma}      
\newcommand{\bk}{\backslash}
\newcommand{\cro}{{\rm cr}}
\newcommand{\tr}{\operatorname{tr}}
\newcommand{\Nr}{\operatorname{Nr}}
\newcommand{\bbm}{\begin{bmatrix}}
\newcommand{\ebm}{\end{bmatrix}}
\newcommand{\bpm}{\begin{pmatrix}}
\newcommand{\epm}{\end{pmatrix}}
\newcommand{\bsm}{\left(\begin{smallmatrix}}
\newcommand{\esm}{\end{smallmatrix}\right)}
\newcommand{\bsbm}{\left[\begin{smallmatrix}}
\newcommand{\esbm}{\end{smallmatrix}\right]}
\numberwithin{equation}{section}
\title{Modular systoles are extremal for the crossing number}
\author{Claire Burrin}
\address{Institute of Mathematics, University of Zurich, Switzerland}
\email{claire.burrin@math.uzh.ch}
\author{Hugo Parlier}
\address{Department of Mathematics, University of Fribourg, Switzerland}
\email{hugo.parlier@unifr.ch}
\thanks{C.B. acknowledges the support of SNSF grant number 201557}
\begin{document}

\maketitle

\begin{abstract}
    We study crossing numbers for systoles of congruence surfaces. Taken as a family of curves on a family of surfaces, we show that the growth rate of their intersection is optimally small among all sets of curves of the same cardinality lying on the same topological surface. 
\end{abstract}

\section{Introduction}
Crossing numbers are well-studied quantities in the context of graph theory that serve as a measure of non-planarity. By analogy with crossing lemmas for graphs, recent results have explored to which extent collections of curves on surfaces must cross in terms of the number of curves and the topology of the surface. In this context, we analyze an important family of curves on surfaces which arise on the interface of hyperbolic geometry and number theory. 

Let $X(N)=\G(N)\bk \h$ be the congruence surface of level $N$, where 
\begin{align*}
    \G(N) &=\ker\{\PSL_2(\Z)\to\PSL_2(\Z/N\Z)\}\\
    & = \left\{ A \in \SL_2(\Z) \mid A\equiv \pm I \, (\text{mod }N) \right\}/\pm I
\end{align*}
is the principal congruence subgroup of level $N$. These surfaces have suggestively been described as "hedgehog shaped"; they are noncompact surfaces for which the genus and number of cusps grow with order $N^3$ and $N^2$ respectively. 

A further descriptive invariant of surfaces is given by its systoles (the family of shortest closed geodesics). In their moduli space, the surfaces $X(N)$ have extremal systole length $\mathrm{sys}(X(N))$ (see \cite{SS94} and \cite{Bavard1996}) and it is believed that their number is also maximal. It was recently established that they are also asymptotically minimal with respect to the crossing number; namely if $\cC(N)$ denotes the family of systoles on $X(N)$, then
\begin{align}\label{lowerbound}
\underset{N\to\infty}{\lim\sup}\, \frac{\log\cro(\cC_N)}{\log N} \geq 5,    
\end{align}
for any family $\cC_N$ of curves on $X(N)$ with $|\cC_N|\approx |\cC(N)|$ and that this lower bound is sharp precisely for $\cC(N)$ \cite{BBS}.The lower bound (\ref{lowerbound}) is established based on a `soft' argument, relying essentially on the growth of the Euler characteristic. A stronger lower bound found in \cite{HP} implies that for a countably infinite family $\cN$ of $N$'s we have the arithmetic lower bound
\begin{align}\label{HP}
\cro(\cC(N)) \gg N^5 L(1,\chi)^2,
\end{align}
where $\gg$ is the Vinogradov symbol\footnote{The Vinogradov symbol $\ll$ means that $A\leq cB$ for some constant $c>0$ that may depend on the context but not on the quantities $A$ and $B$. The notation $A\asymp B$ means that $A\ll B\ll A$.} and $L(1,\chi)$ is the special value at $1$ of the Dirichlet $L$-function associated to the Kronecker symbol $\chi(n)=(\tfrac{N^2-4}{n})$. (Although $\chi$ and the associated special $L$-value depend on $N$, we drop the reference to $N$ to avoid overly cumbersome notation.) Determining the size of $L(1,\chi)$ is a central topic in analytic number theory: we have the established known bounds
\begin{align*}
    N^{- \epsilon} \ll_\epsilon  L(1,\chi) \ll \log N,
\end{align*}
whereby the upper bound is elementary and the lower bound follows from deep work of Siegel, while under the additional assumption of GRH we have the stronger bounds
\begin{align*}
    (\log\log N)^{-1} \ll L(1,\chi) \ll \log\log N.
\end{align*}

The family $\cN$ for which (\ref{HP}) holds is explicit: 
$$
\cN \coloneqq  \{ N\geq 3 \mid N^2-4 \text{ is squarefree} \}.
$$
It is an infinite family, and in fact it has positive density in the natural numbers. On the other hand a more careful reading of the argument in \cite{BBS} yields the upper bound
\begin{align}\label{BBS}
    \cro(\cC(N)) \ll N^5 L(1,\chi)^2
\end{align}
for all $N\in\cN$. Putting these results together, we obtain the following growth estimate. 
\begin{thm}
For $N\in \cN$, we have 
$
\cro(\cC(N)) \asymp N^{5} L(1,\chi)^2.
$
\end{thm}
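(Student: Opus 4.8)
The plan is to read off the claimed two-sided bound directly from the two estimates already recorded above, since each of the matching inequalities is in place. Recall that $A \asymp B$ abbreviates $A \ll B \ll A$, so to prove $\cro(\cC(N)) \asymp N^5 L(1,\chi)^2$ it suffices to produce both a lower bound $N^5 L(1,\chi)^2 \ll \cro(\cC(N))$ and an upper bound $\cro(\cC(N)) \ll N^5 L(1,\chi)^2$, each uniform for $N$ in the family $\cN$. The first is the arithmetic lower bound \eqref{HP} supplied by \cite{HP}, and the second is the upper bound \eqref{BBS} extracted from \cite{BBS}; pairing them yields the theorem at once.

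It is worth isolating why $\cN$ is the natural range. A systole of $X(N)$ lifts to a primitive hyperbolic element of $\G(N)$ whose trace has absolute value $N^2-2$, so the associated discriminant is $(N^2-2)^2-4 = N^2(N^2-4)$. When $N^2-4$ is squarefree — precisely the defining condition for $N\in\cN$ — the quantity $N^2-4$ is the fundamental discriminant underlying this trace, and Dirichlet's class number formula expresses the count of the relevant conjugacy classes cleanly in terms of $L(1,\chi)$ with $\chi = (\tfrac{N^2-4}{\cdot})$. This is what lets both \eqref{HP} and \eqref{BBS} take the clean shape $N^5 L(1,\chi)^2$ without an extra conductor correction, and it is the reason the two bounds can be matched only on $\cN$.

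For the lower bound I would simply invoke \eqref{HP}, which already forces $\gg N^5 L(1,\chi)^2$ crossings for each $N\in\cN$, so nothing further is needed there. The genuine content, and the step I expect to be the main obstacle, is the upper bound \eqref{BBS}: the estimate in \cite{BBS} is phrased `softly', essentially through the growth of the Euler characteristic, and gives only \eqref{lowerbound} rather than a bound with explicit $L$-value dependence. The task is therefore to re-run that counting argument while tracking constants finely enough that the factor $L(1,\chi)^2$ emerges. Concretely, one bounds the total crossing number of the systole family by $\ll (n\,\ell)^2/\mathrm{area}(X(N))$, where $n$ is the number of systoles and $\ell = \mathrm{sys}(X(N)) \asymp \log N$, and substitutes $\mathrm{area}(X(N)) \asymp N^3$ together with the class-number count $n \asymp N^4 L(1,\chi)/\log N$ from the discussion above; the two logarithmic factors (from $\ell$ and from the regulator in the class number formula) cancel to leave exactly $N^5 L(1,\chi)^2$. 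The delicate point is that this demands not just an order-of-magnitude count of systoles but control of their intersection pattern sharp enough that the upper bound carries the \emph{same} $L(1,\chi)^2$ dependence as \eqref{HP} — this is the `more careful reading' of \cite{BBS} alluded to above. Once \eqref{BBS} is secured in this sharp form, combining it with \eqref{HP} proves the theorem.
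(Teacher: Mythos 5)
Your top-level reduction --- pair the lower bound \eqref{HP} with the upper bound \eqref{BBS} --- is indeed the paper's skeleton, but the substance of the paper is the proof of those two inequalities, and your treatment of each has a genuine gap. For the lower bound, simply ``invoking \eqref{HP}'' is close to circular: \cite{HP} supplies only the general crossing lemma (stated in the paper as Theorem \ref{thm:HP}, in terms of the number of curves $m$ and the Euler characteristic), and turning it into the arithmetic bound \eqref{HP} still requires the count $|\cC(N)| \gg N^3 h(D) \gg N^4 L(1,\chi)/\log N$, which rests on the Schmutz Schaller correspondence, Dirichlet's class number formula, and the evaluation $\epsilon_D \asymp N$ of \cref{lm:size fundamental unit}, together with the topological step of capping the $\approx N^2$ cusps so that $|\chi| \asymp N^3$ for a closed surface. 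None of this is in your proposal, though it is the lighter of the two omissions.

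The serious gap is the upper bound. Your proposed inequality $\cro(\cC(N)) \ll (n\ell)^2/\mathrm{area}(X(N))$ is not a theorem: it is the integral-geometry \emph{heuristic} for curves that are spread out over the surface like random translates, and it fails as an upper bound for general configurations, since a family of geodesics of total length $L$ can concentrate its crossings in a region of bounded area regardless of how large $\mathrm{area}(X(N))$ is. (The only general per-pair bound available here is in terms of lengths and injectivity radius, which gives $O(1)$ crossings per pair, hence only $\cro \ll n^2 \approx N^8 L(1,\chi)^2/(\log N)^2$ --- far too weak.) That the systoles of $X(N)$ actually \emph{achieve} the equidistribution heuristic is precisely the content of the theorem, and proving it requires arithmetic input you never invoke: the paper uses the extension of the Schmutz Schaller correspondence from \cite{BBS} to reduce to the modular surface, $\cro(\cC(N)) \ll N^3 I(N)$ where $I(N)$ counts intersecting pairs of closed geodesics of trace $N$ on $X(1)$, then bounds $I(N) \ll N^2 L(1,\chi)^2$ (Theorem \ref{main}) by combining the Jung--Sardari bound \cite[Thm 1.4]{JS} on intersecting modular geodesics of fixed discriminant with the prime geodesic theorem, the latter showing that nonprimitive geodesics contribute only $O(N^{3/4}\log N)$, which is negligible by Siegel's bound $L(1,\chi) \gg_\epsilon N^{-\epsilon}$. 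You also misattribute the roles of the two bounds in \cite{BBS}: the ``soft'' Euler-characteristic argument there yields the universal lower bound \eqref{lowerbound}, not the upper bound \eqref{BBS}, so ``re-running that counting argument with better constants'' is aimed at the wrong estimate and cannot produce the $L(1,\chi)^2$ dependence.
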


This shows that $\cC(N)$ is extremal for the crossing number in a stronger sense. More precisely, the crossing lemma in \cite{HP} gives an inequality on the crossing number of a family of curves which relates the Euler characteristic and the number of curves in the family. The inequality really relies on these two variables and has been shown to be sharp in terms of the growth of the number of curves (for fixed Euler characteristic). The result above shows that the inequality is also sharp in a hybrid regime where both the topology and the number of curves grow in function of $N$. More precisely, if $\Gamma$ is any set of curves of the same cardinality as $\cC(N)$ on the same surface, then $\cro(\Gamma) \gg \cro(\cC(N))$.

The purpose of this note is both to record this result and to present a relatively self-contained proof of it, which we now sketch. For the lower bound (\ref{HP}), we rely on the crossing lemma in \cite{HP}, but in fact, one could almost mimic the proof directly. The crossing lemma 
gives an explicit lower bound on the crossing of a curve system, and its proof relies on encoding the intersections of the curve system (in our case $\cC(N)$) as a graph drawing on a topological surface $\Sigma$. An application of the circle packing theorem for triangulations embedded on surfaces, a Cauchy-Schwartz argument and a quantitative version of the prime geodesic theorem then yields the lower bound. To prove the upper bound (\ref{BBS}), we also rely on the work of Schmutz Schaller (as extended in \cite{BBS}), which gives us
\begin{align*}
    \cro(\cC(N))\ll N^3 \times \# \text{ intersecting closed geodesics of trace }N \text{ on }X(1).
\end{align*}
If on the right hand-side we were restricting to \emph{primitive} closed geodesics on $X(1)$, then (\ref{BBS}) would directly follow from the work of Jung and Sardari \cite{JS} on the distribution of intersecting modular geodesics and  Dirichlet's class number formula. To access their result, we show using the prime geodesic theorem that the contribution of nonprimitive closed geodesics is marginal.

\section{Background: Modular Geodesics}

In his approach to showing that modular surfaces are extremal for systole length, Schmutz Schaller proved that the isometry classes of systoles on $X(N)$ correspond to the closed (not necessarily primitive) geodesics of trace $N$ on $X(1)=\G(1)\backslash\h=\PSL(2,\Z)\backslash\h$. 

We call a primitive oriented closed geodesic on $X(1)$ a modular geodesic. To study modular geodesics, we have access to a completely number theoretic toolbox thanks to the following (classical) correspondence.
\begin{thm}
Modular geodesics  are in one-to-one correspondence with primitive integral binary quadratic forms (PIBQF) of positive (squarefree) discriminant.
\end{thm}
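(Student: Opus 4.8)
The plan is to establish the bijection between modular geodesics (primitive oriented closed geodesics on $X(1)$) and $\PSL_2(\Z)$-equivalence classes of primitive integral binary quadratic forms of positive non-square discriminant, via the classical dictionary connecting geodesics to hyperbolic conjugacy classes and then to quadratic forms. The first step is to recall that oriented closed geodesics on $X(1)=\PSL_2(\Z)\bk\h$ are in bijection with conjugacy classes of hyperbolic elements of $\PSL_2(\Z)$, where the geodesic is the projection of the axis of the hyperbolic element $\g$, and the primitive geodesics correspond exactly to primitive hyperbolic conjugacy classes (those $\g$ not equal to a proper power $\g_0^k$ with $k\geq 2$).

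Next I would set up the map from hyperbolic elements to quadratic forms. Given $\g=\bsm a & b \\ c & d\esm\in\SL_2(\Z)$ with $|\tr\g|=|a+d|>2$, associate the binary quadratic form $Q_\g(x,y)=cx^2+(d-a)xy-by^2$. A direct computation shows this has discriminant $\mathrm{disc}(Q_\g)=(d-a)^2+4bc=(a+d)^2-4(ad-bc)=(\tr\g)^2-4$, which is positive and non-square precisely because $|\tr\g|>2$ and $\tr\g\in\Z$. The key equivariance is that conjugating $\g$ by $M\in\SL_2(\Z)$ corresponds to the natural $\SL_2(\Z)$-action on forms, so conjugacy classes map to equivalence classes of forms; one must also check that $\g$ is primitive as a hyperbolic element if and only if $Q_\g$ is a primitive form (i.e.\ $\gcd(c,d-a,b)=1$), which follows from the theory of automorphs.

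To construct the inverse, I would use Gauss's theory relating automorphs of a form $Q$ of discriminant $D>0$ to solutions of the Pell equation $t^2-Du^2=4$: the automorph group of $Q$ is generated by a hyperbolic matrix built from the fundamental solution, and primitivity of the form matches the primitive (fundamental) Pell solution, which in turn matches primitivity of the geodesic. This shows the two maps are mutually inverse on equivalence classes. The orientation of the geodesic corresponds to a choice of sign conventions (equivalently, to distinguishing $Q$ from $-Q$, or fixing the attracting versus repelling fixed point), and I would fix the convention so that oriented primitive geodesics match primitive forms of positive discriminant without further identification.

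The main obstacle I expect is the careful bookkeeping of \emph{primitivity} on both sides and the role of orientation: one must ensure that ``primitive as a geodesic'' (not a proper power of a shorter closed geodesic) corresponds exactly to ``primitive as a quadratic form'' (content $1$), rather than to the narrower condition of being a fundamental Pell solution. These are genuinely different primitivity notions, and reconciling them requires the fact that for the full modular group the content-$1$ condition on $Q_\g$ is equivalent to $\g$ not being a proper power, which ultimately rests on $\PSL_2(\Z)$ having no unexpected automorphs beyond those coming from Pell's equation. Since the excerpt labels this correspondence as classical, I would cite the standard references and verify the discriminant computation and the equivariance explicitly, treating the primitivity and orientation matching as the substantive points.
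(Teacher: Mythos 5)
Your overall architecture --- geodesics $\leftrightarrow$ hyperbolic conjugacy classes $\leftrightarrow$ form classes, with equivariance and automorph/Pell theory supplying the inverse --- is the same classical dictionary the paper uses, just run in the opposite direction (the paper starts from a form $Q$, takes the geodesic with endpoints the roots of $Q(x,1)=0$, and identifies its cyclic stabilizer, which is exactly your automorph group). However, the key fact you invoke to reconcile the two primitivity notions is false as stated, in both places where you state it. You claim that a hyperbolic $\g=\bsm a & b\\ c & d\esm$ is primitive (not a proper power) in $\PSL_2(\Z)$ if and only if $Q_\g(x,y)=cx^2+(d-a)xy-by^2$ has content $1$. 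Only the ``if'' direction holds. For a counterexample to ``only if'', take the primitive form $Q_0=x^2+3xy-y^2$ of discriminant $13$: the fundamental solution of $t^2-13u^2=4$ is $(t,u)=(11,3)$, and the corresponding fundamental automorph
\[
\g=\bpm 1 & 3\\ 3 & 10\epm
\]
is primitive in $\PSL_2(\Z)$ (no integer $s$ satisfies $s^2-2=11$ or $s^3-3s=11$, and fourth or higher powers of hyperbolic elements have trace at least $47$), yet $Q_\g=3x^2+9xy-3y^2=3Q_0$ has content $3$. The general phenomenon: if $\g$ is the automorph of a primitive form $Q_0$ attached to the Pell solution $(t,u)$, then $Q_\g=uQ_0$, so the content of $Q_\g$ is the $u$-component of that Pell solution; primitivity of $\g$ means $(t,u)$ is the \emph{fundamental} solution, not that $u=1$. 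Consequently your map $\g\mapsto Q_\g$ does not even land in primitive forms; the correct map sends $\g$ to $Q_\g$ divided by its content, and primitivity of $\g$ corresponds to $\g$ generating the stabilizer of its axis (equivalently, the automorph group of $Q_0$), which is what the theory of automorphs actually gives.

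The reason this wrinkle is harmless in the paper is the squarefree hypothesis, which you weakened to ``non-square''. If $\tr(\g)^2-4=\mathrm{disc}(Q_\g)$ is squarefree, then $Q_\g$ is automatically primitive, because the square of its content divides its discriminant; and for the discriminants $D=N^2-4$ that the paper actually uses, \lemref{lm:size fundamental unit} gives $u_0=1$ for the fundamental Pell solution, so fundamental automorphs do have content-one associated forms and the two primitivity notions coincide. (The paper's own sketch quietly relies on the same fact when it asserts that the form $Q_A$ of the stabilizer's generator has ``the same discriminant $D$''; for discriminant $13$ that generator's form has discriminant $117$.) So your proof can be repaired either by keeping the squarefree restriction from the statement, or by replacing your claimed equivalence with the correct content bookkeeping above; as written, that step fails.
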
 
\begin{proof}
The correspondence is as follows: given a PIBQF $Q(x,y)=Ax^2+Bxy+Cy^2$, the quadratic equation $Q(x,1)=Ax^2+Bx+C=0$ has discriminant $D=B^2-4AC$ and two (quadratic irrational) solutions $x_1,x_2\in\R$, which can be seen as the endpoints of an oriented geodesic $\g_Q$ in the hyperbolic upper half-plane $\h$. The stabilizer group of this geodesic under the action of ${\rm Isom}^+(\h)$ is a cyclic group with generator a hyperbolic isometry. If we start from the matrix representative $A=\bsm a & b\\ c &d\esm\in \SL_2(\Z)$ that respects the orientation of $\g_Q$ and take the associated quadratic form $Q_A(x,y)=bx^2+(d-a)xy - cy^2$, we find that $Q_A(x,1)=0$ has the same discriminant $D$ and solutions $x_1,x_2$. Next, we consider the dual action of $\SL_2(\Z)$ on the domain of the quadratic form by linear change of variables and on $\h$ by fractional linear transformation. These actions quotient through $\PSL_2(\Z)$ and are preserved by the correspondence described here. We conclude with the observation that all isometric images of $\g_Q$ under $\PSL_2(\Z)$ project to a single primitive closed oriented geodesic on $X(1)$.
\end{proof}
\begin{rmk}
The proof shows that modular geodesics of trace $N$ have discriminant $D=N^2-4.$
\end{rmk}

The number of PIBQF of discriminant $D$ is finite, given by the class number $h(D)$, and it has been known since Gauss that $1\leq h(D)<\infty$. (His famous conjecture that $h(D)$ is infinitely often $1$ when $D$ is a fundamental discriminant is still open.) Mostly any information available on $h(D)$ stems from Dirichlet's class number formula, 
\begin{align*}
h(D)\log \epsilon_D = \sqrt D L(1,\chi),
\end{align*}
which holds whenever $D$ is a fundamental discriminant, i.e., when $D\equiv 1$ (mod 4) and is squarefree or if $D$ is a multiple of 4 and $D/4$ is squarefree with $D/4\equiv 2,3$ (mod 4) --- that is, whenever $D$ is the discriminant of the quadratic field $\Q(\sqrt D)$.

Every one of the finitely many modular geodesics of (fundamental) discriminant $D$ has length $\log \epsilon_D$, with $\epsilon_D=\tfrac{s+t\sqrt D}{2}$ where $(s,t)$ is the minimal solution to the Pell equation $s^2-t^2 D=4$. 

\begin{lm}\label{lm:positive density}
There are infinitely many $N\geq 3$ such that $D=N^2-4$ is a fundamental discriminant.
\end{lm}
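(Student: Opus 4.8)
The plan is to reduce to odd values of $N$ and then run an elementary squarefree sieve on the polynomial $N^2-4=(N-2)(N+2)$. First I would observe that if $N$ is odd then $N^2\equiv 1\pmod 4$, so $D=N^2-4\equiv 1\pmod 4$ automatically; by the definition of a fundamental discriminant recalled above, such a $D$ is fundamental precisely when it is squarefree (and for $N\geq 3$ it is positive and not a perfect square). It therefore suffices to produce infinitely many odd $N\geq 3$ for which $N^2-4$ is squarefree, which I would do by showing that the set of such $N$ has positive density in the odd integers; the family $\cN$ then contains all of these.

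Second, I would set up the complementary count $B(x)=\#\{\,N\text{ odd},\ 3\le N\le x:\ N^2-4\text{ is not squarefree}\,\}$ and bound it by a union bound over prime squares. The key structural input is that for odd $N$ the two factors $N-2$ and $N+2$ are coprime: both are odd and they differ by $4$. Hence for an odd prime $p$ the divisibility $p^2\mid (N-2)(N+2)$ forces $p^2\mid N-2$ or $p^2\mid N+2$, and in either case $p^2\le N+2\le x+2$, i.e.\ only primes $p\le\sqrt{x+2}$ can contribute. This single observation is what keeps the sieve trivial: there is no genuine ``large modulus'' tail to control.

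Third, for each odd prime $p\le\sqrt{x+2}$ the congruence $N^2\equiv 4\pmod{p^2}$ has exactly the two solutions $N\equiv\pm 2\pmod{p^2}$, and intersecting with the condition that $N$ be odd leaves two residue classes modulo $2p^2$; hence $\#\{N\text{ odd},\ N\le x:\ p^2\mid N^2-4\}\le x/p^2+2$. Summing over the relevant primes gives
\[
B(x)\ \le\ \sum_{2<p\le\sqrt{x+2}}\Bigl(\tfrac{x}{p^2}+2\Bigr)\ \le\ \Bigl(\sum_{p}\tfrac{1}{p^2}-\tfrac14\Bigr)x+O(\sqrt x),
\]
where the $O(\sqrt x)$ is simply $2\pi(\sqrt{x+2})\le 2\sqrt{x+2}$. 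Since $\sum_p p^{-2}=0.4522\ldots$, the leading constant is $\sum_{p}p^{-2}-\tfrac14=0.2022\ldots<\tfrac12$, while the number of odd $N$ in $[3,x]$ is $\tfrac12 x+O(1)$. Subtracting, the number of good $N$ is at least $\bigl(\tfrac34-\sum_p p^{-2}\bigr)x-O(\sqrt x)=0.2977\ldots\,x-O(\sqrt x)\to\infty$, which proves the lemma and in fact establishes the positive density asserted for $\cN$.

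The only point that might look like it requires care is the tail of the sieve, namely the contribution of large prime squares; but the coprimality of $N-2$ and $N+2$ caps the relevant primes at $\sqrt{x+2}$, so this ``obstacle'' collapses and the error is a harmless $O(\sqrt x)$, requiring neither $\pi(x)=o(x)$ nor any input beyond the convergence of $\sum_p p^{-2}$. The one thing worth double-checking is that the constant $\sum_p p^{-2}-\tfrac14$ genuinely lies below $\tfrac12$ so that the union bound beats the total count — which it does, with room to spare.
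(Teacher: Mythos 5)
Your proof is correct, but it takes a genuinely different route from the paper's. The paper disposes of the lemma by citation: it invokes an old theorem of Estermann giving a positive density of $N$ with $N^2-4$ squarefree, plus the same observation you make that squarefreeness forces $N$ odd, hence $D\equiv 1\pmod 4$ and fundamental. You instead prove the density statement from scratch with an elementary union-bound sieve, and the reason your sieve closes so painlessly is exactly the point you single out: $N^2-4=(N-2)(N+2)$ is \emph{reducible}, and for odd $N$ the two linear factors are coprime (their difference is $4$ and both are odd), so any prime square $p^2$ dividing the product must divide a single factor, forcing $p\le\sqrt{x+2}$ and eliminating any large-modulus tail. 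This is worth contrasting with Estermann's theorem in its usual setting (e.g.\ $N^2+1$), where the polynomial is irreducible and the whole difficulty is controlling square divisors of size comparable to $x$; your argument quietly shows that for this particular polynomial no such input is needed. Your bookkeeping checks out: each odd prime contributes at most $x/p^2+2$ bad $N\le x$ (two residue classes modulo $2p^2$), the constant $\sum_{p>2}p^{-2}=\sum_p p^{-2}-\tfrac14\approx 0.2022$ lies safely below $\tfrac12$, and the surviving proportion $\tfrac34-\sum_p p^{-2}\approx 0.2977$ is positive, which gives infinitude and indeed the positive density that the paper also asserts for $\cN$. The trade-off is the expected one: the paper's proof is two lines modulo a reference, while yours is self-contained, elementary, and produces an explicit density lower bound at the cost of a short computation.
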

\begin{proof}
It suffices to know that every squarefree discriminant $D\equiv1$ (mod 4) is a fundamental discriminant. An old result of Estermann states that there is a positive density of natural numbers $N$ such that $N^2-4$ is squarefree. Since this also forces $N$ to be odd, the corresponding discriminants $D=N^2-4$ are fundamental discriminants, and there are infinitely many such $N$'s.
\end{proof}

\begin{lm}\label{lm:size fundamental unit}
For $D=N^2-4>0$ squarefree, we have $\epsilon_D = \tfrac{N+\sqrt{N^2-4}}{2}$.
\end{lm}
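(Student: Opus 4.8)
The plan is to identify the minimal positive solution of the Pell-type equation $s^2 - t^2 D = 4$ and then read off $\epsilon_D$ from its definition. First I would check directly that $(s,t) = (N,1)$ is a solution: since $D = N^2 - 4$, we have $N^2 - (N^2-4) = 4$, and this solution is positive because $N \geq 3$. Because $D$ is squarefree and exceeds $1$, it is not a perfect square, so $\sqrt D$ is irrational and the solution set has the usual Pell structure.

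The only substantive point is minimality. I would argue that the minimal solution --- the one producing the smallest value $\tfrac{s+t\sqrt D}{2} > 1$ --- is the one with the smallest $t$. Indeed, for each fixed positive integer $t$ the equation determines $s = \sqrt{Dt^2 + 4}$ uniquely (taking the positive root), and both $s$ and $\tfrac{s+t\sqrt D}{2}$ are strictly increasing in $t$; hence ordering the positive solutions by size coincides with ordering them by $t$. Since $t$ ranges over positive integers, the least admissible value is $t = 1$, which is attained by $(N,1)$. Thus $(N,1)$ is minimal and $\epsilon_D = \tfrac{N + \sqrt{N^2-4}}{2}$, which one may alternatively recognize as the larger root of $x^2 - Nx + 1 = 0$. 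The noteworthy feature, given that the fundamental solution of a generic Pell equation can be enormous, is that here $D+4$ is a perfect square by construction, which forces the smallest possible $t$.

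What deserves care is conceptual rather than computational: the argument hinges on the convention, fixed earlier, that $\epsilon_D$ is defined through $s^2 - t^2 D = 4$ with $+4$ on the right. It is worth flagging that this $\epsilon_D$ is the fundamental \emph{totally positive} unit (equivalently, the generator of the proper automorph group of the associated quadratic form), which need not be the fundamental unit of $\Q(\sqrt D)$. For instance, when $N = 3$ and $D = 5$ the fundamental unit $\tfrac{1+\sqrt5}{2}$ has norm $-1$ and solves $s^2 - t^2D = -4$, not $+4$; the quantity relevant here is its square $\tfrac{3+\sqrt5}{2} = \tfrac{N+\sqrt{N^2-4}}{2}$. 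So no contradiction arises, but this distinction is the one place where a naive reading could go astray.
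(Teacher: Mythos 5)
Your proof is correct, but it takes a genuinely different route from the paper's. You argue minimality directly: since $s=\sqrt{Dt^2+4}$ is determined by $t$ and $\tfrac{s+t\sqrt D}{2}$ is strictly increasing in $t$, ordering solutions by size is the same as ordering them by $t$, and $t=1$ is attained by $(N,1)$, so $(N,1)$ must be minimal. The paper instead invokes the multiplicative structure of the solution set: writing $\epsilon_0=\tfrac{s_0+t_0\sqrt D}{2}$ for the minimal solution, every other solution is a power of $\epsilon_0$, so if $\epsilon_0\neq\tfrac{N+\sqrt D}{2}$ then $\epsilon_0^2\leq\tfrac{N+\sqrt D}{2}$; expanding the square and using $s_0^2=t_0^2D+4$ forces $N^2-4\leq t_0^2 D<N$, a contradiction for $N\geq 3$. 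Your monotonicity argument is more elementary, in that it needs no structure theory of Pell solutions at all, whereas the paper's argument is the standard one phrased in terms of units and powers of the fundamental solution. Your closing remark about norm $-1$ units is also well taken: for $N=3$, $D=5$, the fundamental unit $\tfrac{1+\sqrt 5}{2}$ of $\Q(\sqrt 5)$ solves $s^2-Dt^2=-4$, and the paper's own proof refers to $\epsilon_0$ as ``the fundamental unit of the quadratic field $\Q(\sqrt D)$'' while defining it through the $+4$ equation, glossing over exactly this distinction; since the statement and everything downstream only ever use the $+4$ convention, nothing breaks, but the caveat you flag is accurate and is the one place a careless reading of either proof could go wrong.
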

\begin{proof}
Let $\epsilon_0 = \tfrac{s_0+t_0\sqrt D}{2}$ be the fundamental unit of the quadratic field $\Q(\sqrt D)$; that is, $(s_0,t_0)$ is the minimal solution to the Pell equation $s^2-t^2 D=4$.
For any further solution $(s,t)$, $\epsilon = \tfrac{s+t\sqrt D}{4}$ is a power of $\epsilon_0$. In particular, $\epsilon_0^2 \leq \epsilon$.

Suppose that $\epsilon_0\neq \epsilon_D$ as in the statement, then
\begin{align*}
   4t_0^2 D <  s_0^2 + t_0^2 D + 2s_0 t_0 \sqrt D = (s_0+ t_0 \sqrt D)^2 \leq 2(N+\sqrt D)
\end{align*}   
where the inequality on the left hand-side is obtained using $s_0^2 = 4+ t_0^2 D > t_0^2 D$ and the inequality on the right hand-side is obtained using $\epsilon_0^2\leq \epsilon_D$. But this implies $N^2-4 = D  \leq t_0^2 D < \tfrac{N+\sqrt D}{2}<N$, which is false for $N\geq 3$.
\end{proof}

\section{Lower Bound}

Set $D=N^2-4$. To access Dirichlet's class number formula, we will restrict to $N$ such that $D$ runs over fundamental discriminants. Let $\cC(N)$ be the family of systoles on $X(N)$ and $|\cC(N)|$ be the number of systoles. Schmutz Schaller \cite{SS} proved that closed (not necessarily primitive) geodesics on $X(1)$ of discriminant $D$ are in one-to-one correspondence with isometry classes of systoles on $X(N)$ so that 
\begin{align*}
    |\cC(N)|\gg |\G(1)/\G(N)|\, h(D).
\end{align*}
The subgroup $\G(N)$ is normal in $\G(1)$ with quotient isomorphic to the finite group $\PSL_2(\Z/N\Z)$ of order $\tfrac{1}{2}N^3\prod_{p\mid N}(1-p^{-2})\asymp N^3$. From the class number formula, $h(D)\log \epsilon_D = \sqrt D L(1,\chi)$, available since $D$ is a fundamental discriminant, and using that $\epsilon_D \asymp \sqrt D$ (\cref{lm:size fundamental unit}), we have here the lower bound
$$
|\cC(N)| \gg \frac{N^4}{\log N} L(1,\chi).
$$ 

We now introduce the crossing lemma result that will give us the lower bound. In its statement, $\chi=\chi(\Sigma)$ is understood to denote the Euler characteristic of $\Sigma$ and not the Kronecker symbol. Since both notations are standard in their respective fields, we have decided to keep them as such; it should be clear from context to which we refer.
\begin{thm}[Thm 1.2 \cite{HP}]\label{thm:HP}
Let $\G$ be a collection of $m$ distinct homotopy classes of non-trivial closed and primitive curves on a surface $\Sigma$ of Euler characteristic $\chi$. Then, for all $m\geq e^6(|\chi|+1)$, we have
\begin{align*}
    \cro(\G) > \frac{1}{128|\chi|}\left( m\log\left(\frac{m}{(|\chi|+1)e^6}\right)\right)^2.
\end{align*}
\end{thm}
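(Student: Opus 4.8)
The plan is to convert the curve system into a graph drawn on $\Sigma$ and then to play a combinatorial growth estimate off against a geometric isoperimetric estimate. First I would put the $m$ curves of $\G$ in pairwise minimal position (for instance by realizing each homotopy class as its geodesic representative in some auxiliary hyperbolic metric), so that every intersection is a transverse double point and $\cro(\G)$ equals the number of such points. The arrangement then becomes a $4$-valent graph $G$ embedded on $\Sigma$ whose vertices are the crossings, so that $|V(G)|=\cro(\G)$ and $|E(G)|=2\cro(\G)$; completing the complementary regions to triangles yields a cellular triangulation $T$ of $\Sigma$ with $|V(T)|=:n\asymp \cro(\G)+|\chi|$ by Euler's formula. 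Since at most $O(|\chi|)$ of the homotopy classes can be pairwise disjoint and simple, the hypothesis $m\ge e^6(|\chi|+1)$ guarantees that all but a negligible fraction of the curves genuinely cross, so I may assume the arrangement is nondegenerate.

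The first main estimate is a growth bound. I would count how many distinct primitive homotopy classes can be short: on a finite-area surface of Euler characteristic $\chi$ the number of primitive closed geodesics of length at most $L$ is bounded by $(|\chi|+1)e^{6}\,e^{L}$, a uniform and explicit form of the prime geodesic theorem with the thin parts controlled by the collar lemma. Ordering the curves by length $\ell_1\le\cdots\le\ell_m$, this inverts to $\ell_i\ge \log\big(i/((|\chi|+1)e^6)\big)$, and summing over $i$ gives $\sum_i \ell_i \gtrsim m\log\big(m/((|\chi|+1)e^6)\big)$. This is precisely where the logarithm, the shift by $(|\chi|+1)e^6$, and the constant $e^6$ enter the final bound.

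The second main estimate is a quadratic isoperimetric bound relating total length to crossings, and this is where the circle packing theorem does its work. Applying the circle packing theorem to the triangulation $T$ endows $\Sigma$ with a metric of bounded geometry, encoded by radii $\{r_v\}$ with $\sum_v r_v^2\asymp |\chi|$ once the total area is normalized to the hyperbolic area $2\pi|\chi|$. In this uniformized model I would express the total intersection through the intersection pairing of geodesic currents, using $i(\gamma_i,L)=\ell_i$ for the Liouville current $L$ and $i(L,L)\asymp |\chi|$, and run a Cauchy--Schwarz argument to obtain $\cro(\G)\gtrsim \big(\sum_i \ell_i\big)^2/|\chi|$. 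Combining the two estimates then gives $\cro(\G)\gtrsim \big(m\log(m/((|\chi|+1)e^6))\big)^2/|\chi|$, and tracking the constants through the Cauchy--Schwarz and packing steps is what pins down the explicit factor $1/128$.

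The hard part is the quadratic isoperimetric bound. The Thurston intersection form on geodesic currents is not positive definite, so a naive Cauchy--Schwarz against the Liouville current is not directly available; the role of the circle packing is exactly to furnish a positive-definite combinatorial model, namely a discrete Dirichlet energy on $T$, in which the curves become flows, the area normalization supplies the denominator $|\chi|$, and Cauchy--Schwarz applies legitimately. Controlling the distortion between this discrete energy and the genuine geometric intersection number, uniformly in the topology of $\Sigma$, is the crux of the argument; the growth bound, by contrast, is a careful but routine application of the prime geodesic theorem and the collar lemma.
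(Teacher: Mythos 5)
Your overall architecture is the same as the intended one: realize $\G$ in minimal position as a $4$-valent graph on $\Sigma$ whose vertices are the crossings, complete it to a triangulation $T$ with $n\asymp \cro(\G)+|\chi|$ vertices, circle-pack $T$, and play a counting bound for homotopy classes against Cauchy--Schwarz on the radii. Your first estimate (at most $(|\chi|+1)e^6e^L$ primitive classes of length at most $L$, inverted and summed to give $\sum_i\ell_i\gtrsim m\log(m/((|\chi|+1)e^6))$) is essentially the correct first half. The gap is in your second estimate, and you flag it yourself: the ``quadratic isoperimetric bound'' $\cro(\G)\gtrsim (\sum_i\ell_i)^2/|\chi|$ is never proved, you call its justification ``the crux,'' and neither of your proposed routes can be repaired. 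The Liouville-current route fails for exactly the reason you concede: for a simple closed curve $\g$ one has $i(\g,\g)=0$ while $i(\g,L)=\ell(\g)>0$, so the intersection pairing admits no Cauchy--Schwarz inequality. Worse, the inequality itself, read as a statement about curve lengths in a uniformized metric, is simply false: a single simple closed geodesic can be made arbitrarily long in a suitable hyperbolic metric on $\Sigma$ while its crossing number is zero. Consequently no ``distortion control'' between a discrete Dirichlet energy and geometric intersection numbers can deliver it either; the bound is not a geometric isoperimetric fact, and it can only hold because the circle-packed metric is \emph{adapted to the arrangement}, with combinatorial control on how often the curves visit each circle.

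That missing incidence step is elementary and is what actually closes the argument. In the constant-curvature metric produced by circle packing $T$ (area $2\pi|\chi|$ by Gauss--Bonnet), each edge of the arrangement graph joining tangent circles of radii $r_u,r_v$ has length $r_u+r_v$, so each curve, traced as a path in the $1$-skeleton, has length at most the sum of diameters of the circles it visits. Since the arrangement graph is $4$-valent with vertex set precisely the crossings, every circle is traversed by at most two strands, and summing over all curves gives
\begin{align*}
\sum_i \ell_i \;\le\; 4\sum_v r_v \;\le\; 4\sqrt{n}\,\Bigl(\sum_v r_v^2\Bigr)^{1/2} \;\ll\; \sqrt{\bigl(\cro(\G)+|\chi|\bigr)\,|\chi|},
\end{align*}
where the last step uses that the packing circles have disjoint interiors, so $\pi\sum_v r_v^2\le \mathrm{Area}(\Sigma)\asymp|\chi|$, together with $n\asymp\cro(\G)+|\chi|$. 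Geodesic lengths of the classes are no larger than the lengths of these combinatorial representatives, while your counting lower bound applies to geodesic lengths in this (or any) hyperbolic metric; combining the two chains of inequalities and using $m\ge e^6(|\chi|+1)$ to absorb the additive $|\chi|$ yields the stated bound. So the Cauchy--Schwarz step is legitimate, but only through this length-versus-radii incidence count --- currents and discrete energies are neither needed nor usable here, and without this step your proposal does not reach the conclusion.
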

The genus of $X(N)$ is roughly $N^3$ and it has roughly $N^2$ cusps. Ultimately, what we require is a topological configuration of curves on a surface. For simplicity, we consider the cusps as boundary curves and add one-holed tori to each of them to obtain a closed surface. This increases the genus by at most roughly $N^2$, so in particular the genus is still on the order of $N^3$. The number of curves in our set is unchanged and so $m=|\cC(N)|$ and $|\chi|\asymp N^3$. The above bound thus gives
\begin{align*}
    \cro(\cC(N)) \gg \frac{|\cC(N)|^2 \log(|\cC(N)|/|\chi|)^2}{|\chi|} \gg N^5 L(1,\chi)^2 
\end{align*}
as required. 

It is interesting to note that Theorem \ref{thm:HP} is shown to have  optimal growth rate, for fixed topology, by considering large sets of curves that are non-simple. And in fact, for simple curves the result is not even roughly optimal in the fixed topology regime. In our context however, via the upper bound we proceed to show, our optimal sets of curves are simple. It would be interesting to know exactly under which regimes simple curves fail to reach the optimal bounds. We note that in genus $2$, actual optimal results are known for small curve systems \cite{Jo25}.
\section{Upper Bound}
We again restrict to $N\geq 3$ such that $D=N^2-4$ is squarefree. Let $I(N)$ be the number of intersecting closed (not necessarily primitive) geodesics on $X(1)$ of trace $N$, or equivalently of discriminant $D=N^2-4$. In \cite{BBS} we extended the Schmutz Schaller correspondence \cite{SS} to show that 
\begin{align*}
    \cro(\cC(N)) \ll N^3 I(N).
\end{align*}
It turns out that the contribution to $I(N)$ of nonprimitive geodesics is marginal (as is the case in the prime geodesic theorem) so that $I(N)$ can be estimated directly using the distribution results of Jung and Sardari on intersecting geodesics on the modular surface (in particular \cite[Thm 1.4]{JS}) and the class number formula.
We carry this argument carefully below to obtain a more precise bound on the crossing number than what had been done in \cite{BBS}.
\begin{thm}\label{main}
For $N\geq 3$ such that $D=N^2-4$ is squarefree, we have
\begin{align*}
    I(N) \ll N^2 L(1,\chi)^2,
\end{align*}
for the Kronecker symbol $\chi(n)=(\tfrac{N^2-4}{n})$.
\end{thm}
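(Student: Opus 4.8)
The plan is to split $I(N)$ according to the primitivity of the geodesics involved and to handle the two contributions by completely different means. Write $I(N)=I_{\mathrm{prim}}(N)+I_{\mathrm{npr}}(N)$, where $I_{\mathrm{prim}}(N)$ counts the intersection points among the $h(D)$ primitive modular geodesics of discriminant $D=N^2-4$, and $I_{\mathrm{npr}}(N)$ collects the intersection points incident to at least one \emph{nonprimitive} closed geodesic of trace $N$. The point is that Jung--Sardari control the primitive family directly, whereas the nonprimitive geodesics of trace $N$ are powers of primitive geodesics of strictly smaller trace and can be shown to be few.

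For the primitive part I would appeal to \cite[Thm 1.4]{JS}, which bounds the number of intersection points of the family of primitive geodesics of discriminant $D$ by a constant multiple of the square of their total length (the area of $X(1)$ being absorbed into the implied constant). Since each such geodesic has length $\log\epsilon_D$, the total length is $h(D)\log\epsilon_D$, so $I_{\mathrm{prim}}(N)\ll (h(D)\log\epsilon_D)^2$. As $D=N^2-4$ is squarefree, $N$ is forced to be odd and hence $D\equiv 1\pmod 4$ is a fundamental discriminant; Dirichlet's class number formula then gives $h(D)\log\epsilon_D=\sqrt D\,L(1,\chi)$. Combining this with $D\asymp N^2$ yields $I_{\mathrm{prim}}(N)\ll D\,L(1,\chi)^2\asymp N^2L(1,\chi)^2$, which is already the desired bound.

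It remains to show that $I_{\mathrm{npr}}(N)$ is of strictly smaller order. Each closed geodesic of trace $N$ is a power $\gamma_0^{\,k}$ of a unique primitive geodesic $\gamma_0$, nonprimitivity meaning $k\geq 2$; the trace relation $\tr(\gamma_0^{\,k})=N$ forces the trace $t_0$ of $\gamma_0$ to satisfy $p_k(t_0)=N$ for the relevant (Chebyshev-type) integer polynomial $p_k$ of degree $k$, and in particular $t_0\ll N^{1/k}\leq N^{1/2}$. For each $k$ there is at most one admissible $t_0$, and $k\ll\log N$, so by the prime geodesic theorem (or already by the trivial bound $h(d)\ll\sqrt d\log d$ applied to each admissible discriminant $t_0^2-4$) the number of nonprimitive trace-$N$ geodesics is $\ll N^{1/2+o(1)}$. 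Each of them still has length $\log\epsilon_D\asymp\log N$, so their total length is $N^{1/2+o(1)}$, a factor $\asymp N^{1/2}$ below the primitive total length $\asymp N\,L(1,\chi)$. Bounding the intersection points incident to a nonprimitive geodesic by the product of the nonprimitive total length with the full total length gives $I_{\mathrm{npr}}(N)\ll N^{3/2+o(1)}L(1,\chi)$. Since $L(1,\chi)\gg_\epsilon N^{-\epsilon}$, this is $o(N^2L(1,\chi)^2)$, and the theorem follows.

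The main obstacle is the nonprimitive estimate: one must organize the sum over all powers $k\geq 2$ and over the auxiliary discriminants $t_0^2-4$ producing the required primitive geodesics, and confirm that the prime geodesic theorem delivers a genuine power saving in $N$ that survives against the Siegel lower bound $L(1,\chi)\gg_\epsilon N^{-\epsilon}$. A second point requiring care is the precise normalization of \cite[Thm 1.4]{JS} --- whether it counts intersection points or unordered intersecting pairs, and whether self-intersections are included --- since this fixes the implied constant in $I_{\mathrm{prim}}(N)\ll (h(D)\log\epsilon_D)^2$ and hence the passage through the class number formula.
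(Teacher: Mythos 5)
Your overall architecture --- splitting into primitive and nonprimitive contributions, using Jung--Sardari plus the class number formula for the primitive part, a power-saving count of the nonprimitive geodesics, and Siegel's bound $L(1,\chi)\gg_\epsilon N^{-\epsilon}$ to absorb the error --- is essentially the paper's proof, and your primitive part is correct as written (including the observation that squarefree $D=N^2-4$ forces $N$ odd, hence $D$ fundamental). The genuine gap is in your treatment of $I_{\mathrm{npr}}(N)$: the step ``bounding the intersection points incident to a nonprimitive geodesic by the product of the nonprimitive total length with the full total length'' invokes, as if it were a generic geometric fact, the bound $i(\alpha,\beta)\ll \ell(\alpha)\ell(\beta)$. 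On a \emph{noncompact} surface such as $X(1)$ this is false pair by pair: a closed geodesic of discriminant $d$ lifts to semicircles of Euclidean radius as large as $\asymp\sqrt d$, and the translates of one such lift under $z\mapsto z+1$ can cross a fixed lift of another geodesic $\asymp \sqrt{d}$ times within $O(1)$ fundamental domains of its cyclic stabilizer; so two geodesics of length $\asymp\log N$ can intersect polynomially in $N$, not polylogarithmically, with all the excess crossings occurring high in the cusp. The product-of-total-lengths bound is exactly the nontrivial content of \cite[Thm 1.4]{JS}: it holds for the full family of geodesics of a fixed discriminant, on average over the class, and cannot be used for free on your mixed primitive/nonprimitive pairs.

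The fix is available and is precisely the paper's maneuver. A nonprimitive trace-$N$ geodesic is $\gamma_0^{\,j}$ with $\gamma_0$ primitive of discriminant $D_j\asymp N^{2/j}$, and since $1_{\gamma^j\pitchfork\gamma'^k}=1_{\gamma\pitchfork\gamma'}$, every pair involving a nonprimitive geodesic reduces to a pair of \emph{primitive} geodesics of discriminants $(D_j,D_k)$ with $j$ or $k\geq 2$; applying \cite[Thm 1.4]{JS} to each such pair of discriminants gives $I(N)\ll\bigl(\sum_{1\leq j\ll\log N}h(D_j)\log\epsilon_{D_j}\bigr)^2$, and then one bounds $h(D_j)\log\epsilon_{D_j}\ll N^{3/(2j)}\leq N^{3/4}$ for $j\geq 2$ (the paper uses the prime geodesic theorem; your trivial bound $h(d)\ll\sqrt d\log d$ on the auxiliary discriminants works just as well), so that $I(N)\ll\bigl(NL(1,\chi)+O(N^{3/4}\log N)\bigr)^2$ and Siegel finishes. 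Alternatively, note that the paper's $I(N)$ counts intersecting \emph{pairs} rather than intersection points --- the normalization issue you flag at the end --- and under that reading your own count of nonprimitive geodesics already suffices with no intersection estimate at all: the pairs meeting a nonprimitive geodesic number at most $N^{1/2+o(1)}\cdot N^{1+o(1)}=N^{3/2+o(1)}$, which Siegel's bound makes negligible against $N^2L(1,\chi)^2$.
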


\begin{proof}
We start by expressing $I(N)$ as a multiple sum over modular geodesics on $X(1)$ as follows,
\begin{align*}
      I(N)& = \sum_{j,k\geq1} \sum_{\substack{\g,\g' \text{ prim}\\ \tr(\g^j),\tr(\g'^{k})=N}} 1_{\g^j \pitchfork \g'^k},
\end{align*}
where $1_{\ga \pitchfork \gb}=1$ if the geodesics $\ga$ and $\gb$ intersect and $0$ otherwise. Recall that $\tr(\g)=\Nr(\g)^{1/2}+\Nr(\g)^{-1/2}$, where $\Nr(\g)=e^{\ell(\g)}$ is the `norm' of the closed geodesic $\g$, which is multiplicative. From the rough estimate $\tr(\g)-1<\Nr(\g)^{1/2}<\tr(\g)$, we have that if $\tr(\g^j)=N$ then 
\begin{align*}
    N-1 < \Nr (\g)^{j/2} < N.
\end{align*}
This implies in particular that $j\ll \log N$. We also note that $1_{\g^k\pitchfork \g'^k}=1_{\g\pitchfork \g'}$ so that we may rewrite 
\begin{align*}
  I(N) =\sum_{1\leq j,k\ll N} \sum_{\substack{\g,\g' \text{ prim}\\ \tr(\g^j),\tr(\g'^{k})=N}} 1_{\g \pitchfork \g'}.
\end{align*}
We denote the discriminant of a primitive $\g$ for which $\tr(\g^j)=N$ by $D_j$. Note that if $\g$ is a modular geodesic of discriminant $D_j$ then $\tr(\g^j)=N$. Now \cite[Theorem 1.4]{JS} implies that 
\begin{align*}
   \sum_{\substack{\g,\g' \text{ prim}\\ D(\g)=D_j,D(\g')=D_k}} 1_{\g \pitchfork \g'} \ll h(D_j)\log \epsilon_{D_j} h(D_k)\log \epsilon_{D_k}
\end{align*}
Hence
\begin{align*}
   I(N)
    &\ll \left(\sum_{1\leq j\ll \log N} h(D_j)\log \epsilon_{D_j}\right)^2.
\end{align*}
When $j=1$, $D_j=D$ is a fundamental discriminant and we can apply the class number formula to obtain 
$$
h(D)\log \epsilon_D  \ll N L(1,\chi).
$$
For $j\geq 2$, we cannot guarantee that $D_j$ is a fundamental discriminant, and so we rely on the prime geodesic theorem 
\begin{align*}
   \Pi(x)\coloneqq  \sum_{\substack{\g \text{ prim}\\ \Nr(\g)< x}} \log \Nr(\g) & = x + O(x^{3/4}).
\end{align*}
(Stronger bounds on the error term are known, starting with, e.g., Soundararajan--Young, but these are not needed here.) Then 
\begin{align*}
    h(D_j)\log \epsilon _{D_j} &= \sum_{\substack{\g \text{ prim}\\ D(\g)=D_j}} \log \Nr(\g)  \leq \Pi(N^{2/j})-\Pi((N-1)^{2/j}) = O(N^{3/2j}).
\end{align*}
since $N^{2/j}-(N-1)^{2/j} \ll N^{2/j-1} \ll N^{3/2j}$ when $j\geq2$. We thus have
\begin{align*}
    I(N) \ll \left( NL(1,\chi) +  O(N^{3/4}\log N)\right)^2 = N^2L(1,\chi)^2 + O(N^{7/4}(\log N)^2).
\end{align*}
Siegel showed that $L(1,\chi)\gg_\epsilon  N^{-\epsilon}$ so that we can conclude that $I(N)\ll N^2 L(1,\chi)^2$.
\end{proof}

\end{document}